\numberwithin{equation}{section}
\newtheorem{thm}{\bf Theorem}[section]
\newtheorem{prop}[thm]{\bf Proposition}
\theoremstyle{remark}
\newtheorem{rem}{\bf Remark}[section]
\newcommand{\bc}{\begin{center}}
\newcommand{\ec}{\end{center}}
\newcommand{\bec}{\begin{equation}}
\newcommand{\eec}{\end{equation}}
\newcommand{\bea}{\begin{eqnarray}}
\newcommand{\eea}{\end{eqnarray}}
\newcommand{\ba}{\begin{array}}
\newcommand{\ea}{\end{array}}
\newcommand{\f}{\displaystyle\frac}
\def\ds{\displaystyle}
\def\vs{\vspace{4pt}}
\begin{document}


\title{Symmetries of the Maxwell-Bloch equations with the rotating wave approximation}
\author{Ioan Ca\c su\\
{\small Departamentul de Matematic\u a, Universitatea de Vest din Timi\c soara}\\
{\small Bd. V. P\^arvan, Nr. 4, 300223 Timi\c soara, Rom\^ania}\\
{\small E-mail: casu@math.uvt.ro}}
\date{}

\maketitle

\begin{abstract}
\noindent In this paper a symplectic realization for the Maxwell-Bloch equations with the rotating wave approximation is given, which also leads to a Lagrangian formulation. We show how Lie point symmetries generate a third constant of motion for the considered dynamical system.
\end{abstract}

\noindent \textbf{Keywords:} Maxwell-Bloch equations, symmetries, Hamiltonian dynamics, Lagrangian dynamics.

\section{Introduction}

The Maxwell-Bloch laser equations have significant importance in optics where they describe the interaction between laser light
and a material sample composed of two-level atoms. A nice presentation of the Maxwell-Bloch dynamics is given by David and Holm in \cite{DavHol92}.
Besides its physical interest, the 3-dimensional real valued Maxwell-Bloch equations
have been widely investigated from the different points of view: homoclinic chaos \cite{HoKoSu91}, Lie-Poisson Hamiltonian structures
\cite{DavHol92}, integrability and geometric prequantization \cite{Puta98}, symmetries \cite{DamPas95}, periodic orbits and energy-Casimir map
\cite{LaBiPa10}. Considering a control, Puta \cite{Puta96} studied the stability problem and B\^ inzar \& L\u azureanu
\cite{LazBin12,BinLaz13} studied some properties of energy-Casimir map.

The present work deals with five dimensional real valued Maxwell-Bloch equations with the rotating wave approximation,
\begin{equation}\label{eq1.1}
\left\{\begin{array}{l}
\dot x_1=y_1\\
\dot y_1=x_1z\\
\dot x_2=y_2\\
\dot y_2=x_2z\\
\dot z=-(x_1y_1+x_2y_2).
\end{array}\right.
\end{equation}

This system has been recently studied by Huang \cite{Huang04} (bi-Hamiltonian structure, homoclinic orbits) and Birtea \& Ca\c su \cite{BirCas13}
(stability of equilibria, homoclinic and periodic orbits).

In the following some symmetries of system (\ref{eq1.1}) are considered. The importance of knowing the symmetry group is reflected by using it to determine some special types of solutions.

The symmetry approach for even order systems of differential equations can be found in \cite{Leach81,Dam90,DamSof99,DamSof00}.

For a class of three dimensional dynamical systems different types of symmetries have been computed in \cite{LazBin12a,BinLaz12,LazBin13}.

Theoretical details about symmetries of differential equations can be found in \cite{Fuc83}, \cite{FokFuc81}, \cite{Olver86}, \cite{BluKum89}, \cite{Dam00}.

The second section of the present paper studies the Hamiltonian structure of the considered system and its Lagrangian representation. Thus, a Lie group is determined and its associated Lie algebra defines a Poisson structure. Moreover, a symplectic realization and a Lagrangian realization of system (\ref{eq1.1}) are given.

In section three, the Lie point symmetries of the Euler-Lagrange equations are studied. These symmetries form a four dimensional Lie algebra and some of them are variational symmetries related with the constants of motion of our system. Also, Lie point symmetries and master symmetries are found.

\section{Hamiltonian structures}

In this section a Poisson structure of system (\ref{eq1.1}) is constructed and a symplectic realization of the system \eqref{eq1.1} is given.

Recall that for system (\ref{eq1.1}), the functions $H,C\in{\cal C}^\infty (\mathbb{R}^5,\mathbb{R})$
$$~H(x_1,y_1,x_2,y_2,z)=\f{1}{2}\left(y_1^2+y_2^2+z^2\right),$$
and
$$C(x_1,y_1,x_2,y_2,z)=\f{1}{2}\left(x_1^2+x_2^2\right)+z$$
are constants of motion.

In order to obtain a Poisson structure, let us consider the linear Poisson bracket $\{\cdot,\cdot\}$,
\begin{equation}\label{2}
\{u_i,u_j\}=\sum\limits_{k=1}^5\alpha_{ij}^ku_k+\beta_{ij}~,~i<j,
\end{equation}
where $u_1=x_1,u_2=y_1,u_3=x_2,u_4=y_2,u_5=z$ and $\alpha_{ij}^k,\beta_{ij}\in\mathbb{R}$.

Imposing the condition that $C$ is a Casimir for \eqref{2} and $H$ is a Hamiltonian function, we get a dynamical system which coincides with the system (\ref{eq1.1}) only if
$$\{u_1,u_2\}=1,~\{u_2,u_5\}=u_1,~\{u_3,u_4\}=1,\{u_4,u_5\}=u_3$$
and $\{u_i,u_j\}=0$ otherwise.

Therefore we consider the five-dimensional Lie algebra given by
$$[E_2,E_5]=E_1,[E_4,E_5]=E_3,$$
where
$$E_1=\left[\begin{array}{cccc}
0&0&-1 &0 \\
0&0&0&0\\
0&0&0&0\\0&0&0&0
\end{array}\right];~~
E_2=\left[\begin{array}{cccc}
0&0&0&0\\
0&0&-1 &0 \\
0&0&0&0\\0&0&0&0
\end{array}\right];~~E_3=\left[\begin{array}{cccc}
0&0&0&1 \\
0&0&0&0\\
0&0&0&0\\0&0&0&0
\end{array}\right];$$
$$
E_4=\left[\begin{array}{cccc}
0&0&0&0\\
0&0&0&1 \\
0&0&0&0\\0&0&0&0
\end{array}\right];~~E_5=\left[\begin{array}{cccc}
0&-1&0&0\\
0&0&0&0 \\
0&0&0&0\\0&0&0&0
\end{array}\right].$$

The Lie algebra generated by the base $B=\{E_1,E_2,E_3,E_4,E_5\}$ is the subalgebra
$$\mathfrak{g}=n_5^5=\{E=\left[\begin{array}{cccc}
0&-\theta &-\alpha &\gamma \\
0&0&-\beta &\delta \\
0&0&0&0\\0&0&0&0
\end{array}\right]|~~\alpha ,\beta ,\gamma ,\delta ,\theta\in\mathbb{R}\}$$
of the Lie algebra $g_4$, see \cite{Ben06}, and the corresponding Lie group is given by
$$N_5^5=\{X=\left[\begin{array}{cccc}
1&-r&-m&p \\
0&1&-n&q\\
0&0&1&0\\0&0&0&1
\end{array}\right]|~~m,n,p,q\in\mathbb{R}\}.$$

We observe that the Lie algebras $(\mathfrak{g},+,\cdot,[\cdot,\cdot])$ and $(\mathbb{R}^5,+,\cdot,\times )$ are isomorphic,
where the product $\times :\mathbb{R}^5\times \mathbb{R}^5\rightarrow\mathbb{R}^5$ is defined by
$$
(\alpha_1,\beta_1,\gamma_1,\delta_1,\theta_1)\times (\alpha_2,\beta_2,\gamma_2,\delta_2,\theta_2)=
(\beta_1\theta_2-\beta_2\theta_1,0,\delta_1\theta_2-\delta_2\theta_1,0,0).
$$
Indeed, an easy computation shows that the map
$$
\Phi :(\alpha ,\beta ,\gamma ,\delta ,\theta )\in\mathbb{R}^5\mapsto
\left[\begin{array}{cccc}
0&-\theta &-\alpha &\gamma \\
0&0&-\beta &\delta \\
0&0&0&0\\0&0&0&0
\end{array}\right]\in\mathfrak{g}
$$
is a Lie algebra isomorphism.

Let us consider the bilinear map $\Theta :\mathfrak{g}\times \mathfrak{g}\to\mathbb{R}$ given by the matrix
$$\Theta =\left[\begin{array}{ccccc}
0&1&0&0&0\\-1&0&0&0&0\\0&0&0&1&0\\0&0&-1&0&0\\0&0&0&0&0\end{array}\right].$$
Following \cite{LibMar87}, $\Theta$ is a 2-cocycle on $\mathfrak{g}$ and it is not a coboundary since
$\Theta (E_1,E_2)=1\not=0=f([E_1, E_2])$, for every linear map $f,~f:\mathfrak{g}\to\mathbb{R}.$\\
Therefore, on the dual space $\mathfrak{g}^*\simeq\mathbb{R}^5$, a modified Lie-Poisson structure
is given in coordinates by
$$\pi =\left[\begin{array}{ccccc}0&0&0&0&0\\0&0&0&0&x_1\\0&0&0&0&0\\0&0&0&0&x_2\\0&-x_1&0&-x_2&0\end{array}\right]+
\left[\begin{array}{ccccc}
0&1&0&0&0\\-1&0&0&0&0\\0&0&0&1&0\\0&0&-1&0&0\\0&0&0&0&0\end{array}\right]=
\left[\begin{array}{ccccc}0&1&0&0&0\\-1&0&0&0&x_1\\0&0&0&1&0\\0&0&-1&0&x_2\\0&-x_1&0&-x_2&0\end{array}\right].$$
Hence $(\mathbb{R}^5,\pi ,X_H)$ is a Hamilton-–Poisson realization of the dynamics (\ref{eq1.1}), where
$X_H=(y_1,x_1z,y_2,x_2z,$ $-x_1y_1-x_2y_2).$

The next theorem states that the system (\ref{eq1.1}) has a symplectic realization.
\begin{thm} The Hamilton-Poisson mechanical system $(\mathbb{R}^5,\pi,X_H)$ has a full symplectic realization
$(T^*\mathbb{R}^3\simeq\mathbb{R}^6,\omega ,X_{\tilde{H}})$, with the canonical symplectic form
$$\omega =\mbox{d}p_1\wedge\mbox{d}q_1+\mbox{d}p_2\wedge\mbox{d}q_2+\mbox{d}p_3\wedge\mbox{d}q_3,$$
the Hamiltonian
$$\tilde{H}=\frac{1}{2}p_1^2+\frac{1}{2}p_2^2+\frac{1}{2}\left[p_3-\f{1}{2}(q_1^2+q_2^2)\right]^2$$
and the corresponding Hamiltonian vector field is given by
$$X_{\tilde{H}}=p_1\frac{\partial }{\partial q_1}+p_2\frac{\partial }{\partial q_2}+
\left[p_3-\f{1}{2}(q_1^2+q_2^2)\right]\frac{\partial }{\partial q_3}+
\left[q_1p_3-\f{1}{2}q_1^3-\f{1}{2}q_1q_2^2\right]\frac{\partial }{\partial p_1}+
\left[q_2p_3-\f{1}{2}q_1^2q_2-\f{1}{2}q_2^3\right]\frac{\partial }{\partial p_2}.$$
\end{thm}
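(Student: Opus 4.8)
The plan is to exhibit an explicit realization map $\varphi:\mathbb{R}^6\to\mathbb{R}^5$ and to verify the three conditions that make $(T^*\mathbb{R}^3,\omega,X_{\tilde H})$ a full symplectic realization of $(\mathbb{R}^5,\pi,X_H)$: that $\varphi$ is a Poisson morphism from the canonical structure of $\omega$ onto $\pi$, that it is a surjective submersion, and that it intertwines the two flows in the sense that $\tilde H=H\circ\varphi$ and $\varphi_*X_{\tilde H}=X_H$.

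First I would produce the map by inspection of $\pi$. Two of its blocks are already canonical, $\{u_1,u_2\}=1$ and $\{u_3,u_4\}=1$, which suggests identifying $x_1=q_1,\ y_1=p_1,\ x_2=q_2,\ y_2=p_2$. The only nonconstant entries are $\{u_2,u_5\}=x_1$ and $\{u_4,u_5\}=x_2$, so the last coordinate must satisfy $\{p_1,z\}=q_1$ and $\{p_2,z\}=q_2$ while Poisson-commuting with $q_1$ and $q_2$. The choice
$$\varphi:(q_1,q_2,q_3,p_1,p_2,p_3)\longmapsto\Bigl(q_1,\,p_1,\,q_2,\,p_2,\,p_3-\tfrac12(q_1^2+q_2^2)\Bigr)$$
meets these requirements, the auxiliary variable $q_3$ being cyclic and accounting for the one-dimensional fibres (the corank of $\pi$ is one, with Casimir $C$, so $\dim\mathbb{R}^6=5+1$ is exactly the dimension expected of a full realization).

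Next I would verify that $\varphi$ is Poisson by computing the ten canonical brackets $\{u_i\circ\varphi,u_j\circ\varphi\}$ and matching them to the entries of $\pi$. The brackets among $q_1,p_1,q_2,p_2$ are immediate; the two substantive ones are $\{p_1,\,p_3-\tfrac12(q_1^2+q_2^2)\}=q_1=x_1$ and $\{p_2,\,p_3-\tfrac12(q_1^2+q_2^2)\}=q_2=x_2$, which reproduce the nonconstant entries. The identity $\tilde H=H\circ\varphi$ is then a direct substitution, since $y_1^2+y_2^2+z^2$ pulls back to $p_1^2+p_2^2+[p_3-\tfrac12(q_1^2+q_2^2)]^2$.

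Finally, computing the Hamiltonian vector field of $\tilde H$ for the symplectic form $\omega$ reproduces the stated expression for $X_{\tilde H}$ after abbreviating $p_3-\tfrac12(q_1^2+q_2^2)$ in the coefficients of $\partial/\partial p_1$ and $\partial/\partial p_2$; pushing this field forward by $\varphi$ and using $\dot z=\dot p_3-q_1\dot q_1-q_2\dot q_2=-q_1p_1-q_2p_2$ returns $X_H=(y_1,x_1z,y_2,x_2z,-x_1y_1-x_2y_2)$. Surjectivity and the submersion property follow at once from the explicit form of $\varphi$, whose Jacobian has rank $5$ everywhere. I do not expect a genuine obstacle here: the content is entirely in the bookkeeping, and the only delicate point is the sign and the factor $\tfrac12$ in the correction term $-\tfrac12(q_1^2+q_2^2)$, which is forced simultaneously by the requirement that $\varphi$ be Poisson and by the need to make $\dot z$ come out to $-x_1y_1-x_2y_2$.
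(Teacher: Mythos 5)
Your proposal is correct and takes essentially the same route as the paper: the paper's proof also writes down Hamilton's equations for $\tilde H$, defines the same map $\varphi(q_1,q_2,q_3,p_1,p_2,p_3)=\bigl(q_1,p_1,q_2,p_2,p_3-\tfrac{1}{2}(q_1^2+q_2^2)\bigr)$, and checks that it is a surjective submersion mapping $X_{\tilde H}$ onto $X_H$ and the canonical bracket onto $\pi$, with $H\circ\varphi=\tilde H$. You merely spell out the bracket computations that the paper asserts without detail.
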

\begin{proof} For the Hamiltonian $\tilde{H}$ the corresponding Hamilton's equations are
\begin{equation}\label{eq1.2}
\left\{\begin{array}{l}
\dot{q}_1=p_1\\
\dot{q}_2=p_2\vs\\
\dot{q}_3=p_3-\f{1}{2}(q_1^2+q_2^2)\vs\\
\dot{p}_1=q_1p_3-\f{1}{2}q_1^3-\f{1}{2}q_1q_2^2\vs\\
\dot{p}_2=q_2p_3-\f{1}{2}q_1^2q_2-\f{1}{2}q_2^3\vs\\
\dot{p}_3=0.
\end{array}\right.
\end{equation}
We define the application $$\varphi :\mathbb{R}^6\to\mathbb{R}^5~,~
\varphi (q_1,q_2,q_3,p_1,p_2,p_3)=(x_1,y_1,x_2,y_2,z),$$
 where
$$x_1=q_1,~y_1=p_1,~x_2=q_2,~y_2=p_2,~z=p_3-\f{1}{2}(q_1^2+q_2^2).$$
It follows that $\varphi $ is a surjective submersion, the Hamiltonian vector field $X_{\tilde{H}}$ is mapped onto the Hamiltonian vector field $X_{H}$ (the equations (\ref{eq1.2}) are mapped onto the equations (\ref{eq1.1})), the
canonical structure $\{.,.\}_{\omega }$ is mapped onto the Poisson structure $\pi $, as required.

We remark that $H\circ\varphi =\tilde{H}$. 
\end{proof}
We also denote $\tilde{C}:=C\circ\varphi =p_3.$

The following result shows that system (\ref{eq1.2}) can be written in Lagrangian formalism.
\begin{thm}
The system (\ref{eq1.2}) takes the form
\begin{equation}\label{eq1.3}
\left\{\begin{array}{l}
\ddot{q}_1-q_1\dot{q}_3=0\\
\ddot{q}_2-q_2\dot{q}_3=0\\
\ddot{q}_3+q_1\dot{q}_1+q_2\dot{q}_2=0
\end{array}\right.
\end{equation}
on the tangent bundle $T\mathbb{R}^3$.\\
Moreover, the system (\ref{eq1.3}) represents the Euler-Lagrange equations generated by the Lagrangian
$$L=\frac{1}{2}\dot{q}_1^2+\frac{1}{2}\dot{q}_2^2+\frac{1}{2}\dot{q}_3^2+\frac{1}{2}\dot{q}_3\left(q_1^2+q_2^2\right).$$
\end{thm}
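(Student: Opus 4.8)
first, that the Hamiltonian system (\ref{eq1.2}) is equivalent to the second-order system (\ref{eq1.3}); second, that (\ref{eq1.3}) is exactly the Euler--Lagrange system of the stated Lagrangian $L$.

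The plan is to prove two things in sequence: first, that the Hamiltonian system (\ref{eq1.2}) is equivalent to the second-order system (\ref{eq1.3}); second, that (\ref{eq1.3}) is exactly the Euler--Lagrange system of the stated Lagrangian $L$.

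For the first step I would eliminate the momenta by differentiating the three configuration equations of (\ref{eq1.2}) along solutions. Differentiating $\dot q_1=p_1$ gives $\ddot q_1=\dot p_1$, and the momentum equation for $p_1$ factors as $q_1p_3-\tfrac12 q_1^3-\tfrac12 q_1q_2^2 = q_1\bigl(p_3-\tfrac12(q_1^2+q_2^2)\bigr)=q_1\dot q_3$, where the last equality uses the third equation of (\ref{eq1.2}). This yields $\ddot q_1-q_1\dot q_3=0$, and the identical computation for the index $2$ gives $\ddot q_2-q_2\dot q_3=0$. For the third equation I would differentiate $\dot q_3=p_3-\tfrac12(q_1^2+q_2^2)$; since $\dot p_3=0$, this produces $\ddot q_3=-q_1\dot q_1-q_2\dot q_2$, i.e. the third line of (\ref{eq1.3}). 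The crucial observation here is the factorization of the right-hand sides of the $p_1,p_2$ equations, which is precisely what makes the combination $p_3-\tfrac12(q_1^2+q_2^2)=\dot q_3$ reappear.

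For the second step I would simply compute the three Euler--Lagrange expressions $\tfrac{d}{dt}\partial L/\partial\dot q_i-\partial L/\partial q_i$ for $L$. The kinetic part contributes $\ddot q_i$ in each equation. The coupling term $\tfrac12\dot q_3(q_1^2+q_2^2)$ is the only term requiring care: its $\dot q_3$-derivative contributes $\tfrac12(q_1^2+q_2^2)$ to $\partial L/\partial\dot q_3$, whose total time derivative supplies the $q_1\dot q_1+q_2\dot q_2$ appearing in the third equation, while its $q_1$- and $q_2$-derivatives supply the $-q_1\dot q_3$ and $-q_2\dot q_3$ terms in the first two equations. Since $L$ has no explicit dependence on $q_3$, the third equation has no potential contribution. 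Matching these against (\ref{eq1.3}) completes the identification.

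I do not anticipate a genuine obstacle: the argument is a direct differentiation-and-substitution in one direction and a routine Euler--Lagrange computation in the other. The only point demanding attention is the bookkeeping of the coupling term $\tfrac12\dot q_3(q_1^2+q_2^2)$, which plays a dual role---feeding the velocity-dependent force $q_i\dot q_3$ into the first two equations and the term $q_1\dot q_1+q_2\dot q_2$ into the third---so one must be careful to differentiate it consistently in both the configuration and the velocity slots.
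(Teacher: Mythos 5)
Your proposal is correct and follows essentially the same route as the paper: eliminate the momenta by differentiating Hamilton's equations (using $\dot p_3=0$ and the factorization that reproduces $\dot q_3$), then verify directly that the Euler--Lagrange equations of $L$ coincide with (\ref{eq1.3}). The paper's proof is terser and adds only one further remark---that the Legendre transform $p_i=\partial L/\partial\dot q_i$ recovers $\tilde{H}=\sum p_i\dot q_i-L$, tying the two formulations together---but this is supplementary to the argument you gave.
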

\begin{proof}
From Hamilton's equations (\ref{eq1.2}) we obtain by differentiation
equations (\ref{eq1.3}). Also, for the Lagrangian $L$, the Euler-Lagrange equations $\displaystyle{\frac{d}{dt}\frac{\partial L}{\partial\dot{q}_i}-\frac{\partial L}{\partial q_i}=0}$ takes
the form (\ref{eq1.3}).

Using Legendre transform $\mathbb{F}L:T\mathbb{R}^3\to T^*\mathbb{R}^3$, $\mathbb{F}L(q_1,q_2,q_3,
\dot{q}_1,\dot{q}_2,\dot{q}_3)=(q_1,q_2,q_3,p_1,p_2,p_3)$, where
$p_i=\displaystyle\frac{\partial L}{\partial\dot{q}_i},$
 the relation between the Hamiltonian $\tilde{H}$ and the Lagrangian $L$, $\tilde{H}=\sum p_i\dot{q}_i-L$, holds.
\end{proof}
For details about Lagrangian and Hamiltonian formalism see, for example, \cite{MarRat99}.

\section{Symmetries}

In this section several types of symmetries are studied.
In the beginning, the Lie point symmetries of system (\ref{eq1.3}) are computed. 
Then these symmetries are transformed in Lie point symmetries, respectively symmetries and master symmetries for system (\ref{eq1.1}).

A vector field $${\bf{u}}=\xi (q_1,q_2,q_3,t)\frac{\partial }{\partial t}+\eta_1(q_1,q_2,q_3,t)\frac{\partial }{\partial q_1}+
\eta_2(q_1,q_2,q_3,t)\frac{\partial }{\partial q_2}+\eta_3(q_1,q_2,q_3,t)\frac{\partial }{\partial q_3}$$
is a Lie-point symmetry for Euler-Lagrange equations (\ref{eq1.3}) if the action of its second prolongation on these equations vanishes, where
$$pr^{(2)}({\bf{u}})={\bf{u}}+\sum (\dot{\eta }_i-\dot{\xi }\dot{q}_i)\frac{\partial }{\partial \dot{q}_i}+\sum
\left(\ddot\eta_i-\ddot\xi \dot q_i-2\dot\xi\ddot q_i\right)\frac{\partial }{\partial\ddot q_i}.$$

Thus the following relations are obtained:
\begin{eqnarray}
&&\ddot{\eta }_1-\ddot{\xi }\dot{q}_1-2\ddot{q}_1\dot{\xi }-\eta_1\dot{q}_3-q_1(\dot{\eta }_3-\dot{\xi }\dot{q}_3)=0\nonumber\\
&&\ddot{\eta }_2-\ddot{\xi }\dot{q}_2-2\ddot{q}_2\dot{\xi }-\eta_2\dot{q}_3-q_2(\dot{\eta }_3-\dot{\xi }\dot{q}_3)=0\nonumber\\
&&\ddot{\eta }_3-\ddot{\xi }\dot{q}_3-2\ddot{q}_3\dot{\xi }+\eta_1\dot{q}_1+\eta_2\dot{q}_2+q_1(\dot{\eta }_1-\dot{\xi }\dot{q}_1)+
q_2(\dot{\eta }_2-\dot{\xi }\dot{q}_2)=0.\nonumber
\end{eqnarray}
The resulting equations obtained by expanding $\dot{\xi },\ddot{\xi },\dot{\eta }_1,\ddot{\eta }_1,\dot{\eta }_2,\ddot{\eta }_2,
\dot{\eta }_3,\ddot{\eta }_3$ and replacing $\ddot{q}_1$, $\ddot{q}_2$ and $\ddot{q}_3$ must be satisfied identically in
$t,$ $q_1,$ $q_2,$ $q_3$, $\dot{q}_1,$ $\dot{q}_2$, $\dot{q}_3$, which are all independent variables. By performing straightforward computations, we get
the overall result:
$$\xi =-\alpha t+\beta ~,~~\eta_1=\alpha q_1+\gamma q_2~,~~\eta_2=-\gamma q_1+\alpha q_2~,~~
\eta_3=\alpha q_3+\delta ,$$
where $\alpha ,\beta ,\gamma ,\delta $ are real constants.

We can summarize the above considerations in the following result.
\begin{thm}
The symmetries of equations (\ref{eq1.3}) are given by
\begin{equation}\label{eq1.4}
{\bf u}=(-\alpha t+\beta )\frac{\partial }{\partial t}+(\alpha q_1+\gamma q_2)\frac{\partial }{\partial q_1}+
(-\gamma q_1+\alpha q_2)\frac{\partial }{\partial q_2}+(\alpha q_3+\delta )\frac{\partial }{\partial q_3},
\end{equation}
where $\alpha,\beta,\gamma,\delta\in\mathbb{R}.$
\end{thm}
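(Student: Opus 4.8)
The plan is to run the classical determining-equation procedure for Lie point symmetries of the second-order system (\ref{eq1.3}). The invariance requirement is that $pr^{(2)}({\bf u})$ annihilate each of the three equations on the solution manifold, which produces precisely the three relations displayed just before the theorem. My first step would be to substitute the equations of motion themselves, $\ddot q_1=q_1\dot q_3$, $\ddot q_2=q_2\dot q_3$ and $\ddot q_3=-q_1\dot q_1-q_2\dot q_2$, into those relations, so that afterwards the variables $t,q_1,q_2,q_3,\dot q_1,\dot q_2,\dot q_3$ may legitimately be treated as mutually independent.

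Next I would expand the prolongation coefficients $\dot\xi,\ddot\xi,\dot\eta_i,\ddot\eta_i$ as total time derivatives. Because $\xi$ and the $\eta_i$ depend only on $(t,q_1,q_2,q_3)$, each of these expands into a polynomial in the velocities $\dot q_1,\dot q_2,\dot q_3$ whose coefficients are partial derivatives of $\xi$ and the $\eta_i$. Inserting these expansions into the three invariance relations and then collecting, separately in each relation, the coefficient of every distinct monomial in $\dot q_1,\dot q_2,\dot q_3$ converts the problem into a large but linear overdetermined system of PDEs for $\xi,\eta_1,\eta_2,\eta_3$, namely the determining equations.

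The solution I expect proceeds by descending in the velocity degree. The only cubic contributions come from the terms $-\ddot\xi\,\dot q_i$, and requiring them to vanish forces every second partial derivative of $\xi$ in $q_1,q_2,q_3$ (pure and mixed) to be zero, so $\xi$ must be affine in the spatial variables. Feeding this back, the quadratic coefficients relate the second spatial derivatives of the $\eta_i$ to the $q$-coefficients of $\xi$, and together with the linear and constant coefficients they force $\xi$ to be independent of $q_1,q_2,q_3$ and affine in $t$, namely $\xi=-\alpha t+\beta$, while simultaneously constraining each $\eta_i$ to be affine in the $q$'s. The remaining relations then tie the common dilation coefficient $\alpha$ of $q_1,q_2,q_3$ inside the $\eta_i$ to the slope of $\xi$, force the coupling between $q_1$ and $q_2$ to be antisymmetric through a single parameter $\gamma$ (reflecting the $SO(2)$ invariance of (\ref{eq1.3}) under rotations in the $(q_1,q_2)$-plane), and leave $\delta$ as a free additive constant in $\eta_3$. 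Integrating these conditions reproduces the announced family (\ref{eq1.4}).

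The main obstacle is essentially organizational rather than conceptual: the three relations generate a sizeable collection of monomials (up to cubic order in three velocities), so the determining system is large, and the delicate part is the systematic elimination that isolates exactly the four independent parameters while keeping careful track of signs in the $q_1\leftrightarrow q_2$ coupling. I would close by substituting the candidate field (\ref{eq1.4}) back into all the determining equations to confirm that no residual constraints remain, thereby showing that the displayed ${\bf u}$ is both necessary and the most general symmetry.
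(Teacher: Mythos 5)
Your proposal is correct and follows essentially the same route as the paper: applying the second prolongation to the system, substituting the equations of motion for $\ddot q_1,\ddot q_2,\ddot q_3$, expanding the total derivatives, and splitting the result by monomials in the velocities to obtain and solve the linear overdetermined determining system. The paper compresses the elimination into the phrase ``by performing straightforward computations,'' whereas you outline the degree-descending organization of that elimination and add a final back-substitution check, but the underlying argument is the same.
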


The next proposition provides the algebraic structure of the above symmetries.
\begin{prop} The symmetries of equations (\ref{eq1.3}) form a 4-dimensional Lie algebra $\mathfrak{s}$. \end{prop}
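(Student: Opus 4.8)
The plan is to exhibit an explicit basis of the symmetry space and then verify that it is closed under the Lie bracket of vector fields. Theorem 3.1 already shows that every symmetry of (\ref{eq1.3}) has the form \eqref{eq1.4} and depends linearly on the four real parameters $\alpha,\beta,\gamma,\delta$; hence the set $\mathfrak{s}$ of symmetries is \emph{a priori} a real vector space. Setting one parameter equal to $1$ and the others to $0$ yields four distinguished vector fields
$$v_1=-t\frac{\partial}{\partial t}+q_1\frac{\partial}{\partial q_1}+q_2\frac{\partial}{\partial q_2}+q_3\frac{\partial}{\partial q_3},\quad v_2=\frac{\partial}{\partial t},\quad v_3=q_2\frac{\partial}{\partial q_1}-q_1\frac{\partial}{\partial q_2},\quad v_4=\frac{\partial}{\partial q_3},$$
and a general element of $\mathfrak{s}$ is $\alpha v_1+\beta v_2+\gamma v_3+\delta v_4$.

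First I would establish that $\dim\mathfrak{s}=4$. Since the coefficient functions attached to the independent derivations $\partial/\partial t,\partial/\partial q_1,\partial/\partial q_2,\partial/\partial q_3$ are linearly independent over $\mathbb{R}$ — the $\partial/\partial t$ coefficient $-\alpha t+\beta$ already separates $v_1$ from $v_2$, the antisymmetric pair $(\gamma q_2,-\gamma q_1)$ isolates $v_3$, and the inhomogeneous constant $\delta$ isolates $v_4$ — the four fields $v_1,v_2,v_3,v_4$ are linearly independent. Hence they form a basis and $\mathfrak{s}$ is four-dimensional.

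The substantive step is closure under the bracket $[X,Y]=XY-YX$. I would compute the six pairwise brackets directly from the coordinate formula $[X,Y]^k=\sum_i(X^i\,\partial_i Y^k-Y^i\,\partial_i X^k)$, with $t,q_1,q_2,q_3$ as the coordinates. The expected outcome is that all brackets among $v_2,v_3,v_4$ vanish and that only the scaling field $v_1$ produces nonzero brackets, namely $[v_1,v_2]=v_2$ and $[v_1,v_4]=-v_4$, while $[v_1,v_3]=0$. Since each bracket lands back in $\mathrm{span}\{v_1,v_2,v_3,v_4\}=\mathfrak{s}$, the space is closed under $[\cdot,\cdot]$; combined with the bilinearity, antisymmetry and Jacobi identity that hold automatically for vector-field brackets, this shows that $\mathfrak{s}$ is a Lie algebra of dimension $4$.

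The only genuine obstacle is the bookkeeping in these bracket computations: because $v_1$ mixes all four coordinates with different weights, one must track the action of its scaling terms carefully so as not to miss or double-count contributions, whereas the remaining fields commute for obvious reasons ($v_2$ involves only $t$, $v_4$ only $q_3$, and $v_3$ neither $t$ nor $q_3$). Everything else is routine. As a consistency check one may invoke the general fact that the infinitesimal symmetries of any system of differential equations are automatically closed under the Lie bracket, so the explicit computation serves mainly to pin down the structure constants rather than to decide closure in principle.
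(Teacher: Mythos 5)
Your proposal is correct and follows the same route as the paper: exhibit the four basis fields obtained by setting one of $\alpha,\beta,\gamma,\delta$ to $1$, compute the six pairwise brackets, and observe closure; your structure constants $[v_1,v_2]=v_2$, $[v_1,v_4]=-v_4$, all others zero, agree with the paper's (up to relabeling its ${\bf u}_3\leftrightarrow{\bf u}_4$ as your $v_4, v_3$). The only additions beyond the paper's proof are your explicit linear-independence check and the remark that symmetry algebras are automatically bracket-closed, both of which are fine.
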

\begin{proof} This Lie algebra is generated by the base $\{{\bf u}_1,{\bf u}_2,{\bf u}_3,{\bf u}_4\}$, where
$$\begin{array}{l}
{\bf u}_1=\ds{-t\frac{\partial }{\partial t}+q_1\frac{\partial }{\partial q_1}+q_2\frac{\partial }{\partial q_2}
+q_3\frac{\partial }{\partial q_3}}\vs\\
\ds{{\bf u}_2=\frac{\partial }{\partial t}}\vs\\
\ds{{\bf u}_3=\frac{\partial }{\partial q_3}}\vs\\
\ds{{\bf u}_4=q_2\frac{\partial }{\partial q_1}-q_1\frac{\partial }{\partial q_2}}~,
\end{array}$$
and the Lie algebra bracket is given by:
$$[{\bf u}_1,{\bf u}_2]={\bf u}_2~,~~[{\bf u}_1,{\bf u}_3]=-{\bf u}_3~,~~
[{\bf u}_1,{\bf u}_4]=0~,~~[{\bf u}_2,{\bf u}_3]=0~,~~
[{\bf u}_2,{\bf u}_4]=0~,~~
[{\bf u}_3,{\bf u}_4]=0.$$
\end{proof}
Now, we consider the matrix Lie algebra generated by the base $B=\{A_1,A_2,A_3,A_4\}$,
$$A_1=\left[\begin{array}{cccc}0&0&0&0\\0&0&0&0\\0&0&-1&0\\0&0&0&1\end{array}\right];~~
A_2=\left[\begin{array}{cccc}0&0&1&0\\0&0&0&0\\0&0&0&0\\0&0&0&0\end{array}\right];$$
$$A_3=\left[\begin{array}{cccc}0&0&0&1\\0&0&0&0\\0&0&0&0\\0&0&0&0\end{array}\right];~~
A_4=\left[\begin{array}{cccc}0&1&0&0\\0&0&0&0\\0&0&0&0\\0&0&0&0\end{array}\right],$$
namely
$$\mathfrak{s_g}=\{A=\left[\begin{array}{cccc}
0 &d&b&c \\
0&0&0&0\\
0&0&-a&0\\0&0&0&a
\end{array}\right]|~~a,b,c,d \in\mathbb{R}\}.$$

The following relations $$~[A_1,A_2]=A_2,~[A_1,A_3]=-A_3,~[A_1,A_4]=0,~[A_2,A_3]=0,~[A_2,A_4]=0,~[A_3,A_4]=0$$ hold. Hence the Lie algebras
$\mathfrak{s}$ and $\mathfrak{s_g}$ are isomorphic.

The Lie group corresponding to Lie algebra $\mathfrak{s_g}$ is given by
$$S_G=\{X=\left[\begin{array}{cccc}
1&w&u&v \\
0&1&0&0\\
0&0&e^{-s}&0\\0&0&0&e^s
\end{array}\right]|~~s,u,v,w \in\mathbb{R}\}.$$

The following proposition furnishes variational symmetries of Euler-Lagrange equations (\ref{eq1.3}).
\begin{prop}
In the case $\alpha =0$ the Lie point symmetries ${\bf u}$ given by (\ref{eq1.4}) are variational symmetries of equations (\ref{eq1.3}).
\end{prop}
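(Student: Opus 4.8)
The plan is to invoke the infinitesimal criterion for variational (Noether) symmetries. Recall that a vector field ${\bf u}=\xi\,\partial_t+\sum_i\eta_i\,\partial_{q_i}$ is a variational symmetry of the action $\int L\,dt$ precisely when there exists a function $f$ such that
$$pr^{(1)}({\bf u})(L)+L\,\dot\xi=\dot f,$$
where $pr^{(1)}({\bf u})={\bf u}+\sum_i(\dot\eta_i-\dot\xi\dot q_i)\,\partial_{\dot q_i}$ is the first prolongation (the Lagrangian $L$ depends on at most first derivatives). If one can take $f$ constant, the symmetry is even a strict variational symmetry. So the whole argument reduces to evaluating the left-hand side for the vector field (\ref{eq1.4}) with $\alpha=0$ and exhibiting a suitable $f$.

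First I would specialize (\ref{eq1.4}) to $\alpha=0$, obtaining $\xi=\beta$, $\eta_1=\gamma q_2$, $\eta_2=-\gamma q_1$, $\eta_3=\delta$. The key simplification is that $\xi$ is now constant, so $\dot\xi=0$; this annihilates the term $L\,\dot\xi$ and reduces each prolongation coefficient $\dot\eta_i-\dot\xi\dot q_i$ to $\dot\eta_i$. Computing these, the velocity part of the prolongation becomes $\gamma\dot q_2\,\partial_{\dot q_1}-\gamma\dot q_1\,\partial_{\dot q_2}$, while the coefficient on $\partial_{\dot q_3}$ vanishes because $\eta_3$ is constant.

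Then I would apply $pr^{(1)}({\bf u})$ to $L$ term by term. The spatial part $\gamma q_2\,\partial_{q_1}-\gamma q_1\,\partial_{q_2}$ acts only on the coupling term $\tfrac12\dot q_3(q_1^2+q_2^2)$, producing $\gamma q_1q_2\dot q_3-\gamma q_1q_2\dot q_3=0$; the $\partial_t$ and $\partial_{q_3}$ pieces kill $L$ since $L$ carries no explicit $t$- or $q_3$-dependence; and the velocity part gives $\gamma\dot q_1\dot q_2-\gamma\dot q_1\dot q_2=0$. Hence $pr^{(1)}({\bf u})(L)=0$, and together with $L\,\dot\xi=0$ the criterion is satisfied with $f\equiv0$.

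I do not expect a genuine obstacle here: the computation is elementary and the cancellations are structural, reflecting the rotational invariance in the $(q_1,q_2)$ plane together with the translational invariance in $t$ and $q_3$ that survive once $\alpha=0$ removes the scaling generator. The only point requiring care is bookkeeping the prolongation coefficients correctly and confirming that one may indeed take $f=0$, so that (\ref{eq1.4}) with $\alpha=0$ furnishes strict variational symmetries rather than merely divergence symmetries.
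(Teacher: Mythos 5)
Your proposal is correct and follows essentially the same route as the paper: both verify the infinitesimal criterion $pr^{(1)}({\bf u})L+L\dot{\xi}=0$ by direct computation of the first prolongation applied to $L$; the paper carries general $\alpha$ through and finds $pr^{(1)}({\bf u})L+L\dot{\xi}=3\alpha L$ (hence zero exactly in the case $\alpha=0$), while you specialize to $\alpha=0$ at the outset and observe the termwise cancellations. The only difference is cosmetic, though the paper's general-$\alpha$ computation yields the extra observation (used later for the conformal/master symmetry claim) that the defect is proportional to $L$.
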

\begin{proof}
The vector field ${\bf u}$ with the infinitesimal generators $\xi,\eta_1,\eta_2,\eta_3$ is a variational symmetry if and only if $~pr^{(1)}({\bf u})L+L\dot{\xi }=0$, see \cite{BB04}. In our case
\begin{eqnarray}
pr^{(1)}({\bf u})&=&(-\alpha t+\beta )\frac{\partial }{\partial t}+(\alpha q_1+\gamma q_2)\frac{\partial }{\partial q_1}+
(-\gamma q_1+\alpha q_2)\frac{\partial }{\partial q_2}+(\alpha q_3+\delta )\frac{\partial }{\partial q_3}\nonumber\\
&+&(2\alpha\dot{q}_1+\gamma \dot{q}_2)\frac{\partial }{\partial\dot{q}_1}+(2\alpha\dot{q}_2-\gamma \dot{q}_1)\frac{\partial }{\partial\dot{q}_2}+2\alpha\dot{q}_3\frac{\partial }{\partial\dot{q}_3}.\nonumber
\end{eqnarray}
Therefore  $~pr^{(1)}({\bf u})L+L\dot{\xi }=3\alpha L$ and the conclusion follows.
\end{proof}

\begin{rem}
It is known that variational symmetries give rise to constants of motion. More precisely, using Noether's theorem (\cite{BB04,Noether18}) we obtain that
$$I=-\beta\tilde{H}-\gamma\tilde{J}+\delta\tilde{C}$$
is constant of motion for equations (\ref{eq1.3}), where $\tilde{J}=q_1\dot{q}_2-q_2\dot{q}_1$, or, using Legendre transformation, 
$\tilde{J}=q_1p_2-q_2p_1.$
Moreover,
${\bf u}_2$ represents the time translation symmetry which generates the conservation of energy $\tilde{H}$,
 ${\bf u}_3$ represents a translation in the cyclic $q_3$ direction which is related to the conservation of the conjugate momentum  
 $p_3=\tilde{C}$. Also, the variational symmetry ${\bf u}_4$ represents a rotation around $q_3$ axis and the corresponding
constant of motion $\tilde{J}=q_1p_2-q_2p_1$ is the third component of the angular momentum vector ${\bf q}\times {\bf p}$.
\end{rem}
\begin{thm}
Solving $\tilde{J}=J\circ\varphi $ it follows that $J=x_1y_2-x_2y_1$ is a third constant of motion for the system (\ref{eq1.1}).
\end{thm}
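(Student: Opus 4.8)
The plan is to exploit the structure established in Theorem 2.1, namely that $\varphi$ is a surjective submersion which intertwines the two dynamics, mapping $X_{\tilde H}$ onto $X_H$ and solutions of (\ref{eq1.2}) onto solutions of (\ref{eq1.1}). The first task is to solve the functional equation $\tilde J = J\circ\varphi$ for $J$. This is possible precisely when $\tilde J$ is constant along the fibers of $\varphi$. Since $\varphi$ omits the coordinate $q_3$ (the fibers are exactly the $q_3$-lines) and $\tilde J = q_1p_2-q_2p_1$ does not depend on $q_3$, the function $\tilde J$ descends to a well-defined function on $\mathbb{R}^5$. Substituting the coordinate relations $x_1=q_1$, $y_1=p_1$, $x_2=q_2$, $y_2=p_2$ then reads off $J(x_1,y_1,x_2,y_2,z)=x_1y_2-x_2y_1$ immediately.

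It remains to verify that $J$ is a constant of motion for (\ref{eq1.1}). I would give the direct computation: differentiating along (\ref{eq1.1}) yields $\dot J = \dot x_1 y_2 + x_1\dot y_2 - \dot x_2 y_1 - x_2\dot y_1 = y_1y_2 + x_1x_2 z - y_1y_2 - x_1 x_2 z = 0$, so $J$ is conserved.

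Alternatively, and more in keeping with the spirit of this section, one can argue structurally. Since $\tilde J$ is a constant of motion for (\ref{eq1.2}), established in the preceding Remark via Noether's theorem applied to the rotational variational symmetry ${\bf u}_4$, it is constant along every integral curve of $X_{\tilde H}$. Because these integral curves map under $\varphi$ onto integral curves of $X_H$, the function $J$ is constant along every image curve; the surjectivity of $\varphi$ guarantees that every solution of (\ref{eq1.1}) arises in this way, so $J$ is conserved throughout $\mathbb{R}^5$. Equivalently, one has $\{J,H\}_\pi\circ\varphi = \{J\circ\varphi, H\circ\varphi\}_\omega = \{\tilde J, \tilde H\}_\omega = 0$, and surjectivity of $\varphi$ forces $\{J,H\}_\pi=0$.

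The computations are entirely routine; the only point requiring care is the logical passage from conservation of $\tilde J$ upstairs to conservation of $J$ downstairs. This passage is legitimate precisely because $\tilde J$ is $\varphi$-projectable (constant on the fibers) and because $\varphi$ both intertwines the vector fields and is surjective. Once these two properties are invoked, there is no real obstacle.
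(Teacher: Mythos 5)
Your proposal is correct and follows essentially the paper's own route: the paper states this theorem without any written proof, treating it as an immediate consequence of the preceding Remark (where $\tilde J=q_1p_2-q_2p_1$ is obtained from Noether's theorem applied to the rotational symmetry ${\bf u}_4$) together with the properties of $\varphi$ established in Theorem 2.1 (surjective submersion, intertwining $X_{\tilde H}$ with $X_H$, Poisson map). Your write-up just makes explicit what the paper leaves implicit --- the $\varphi$-projectability of $\tilde J$ (independence of $q_3$), the passage from conservation upstairs to conservation downstairs via surjectivity, and the direct check $\dot J=y_1y_2+x_1x_2z-y_1y_2-x_1x_2z=0$ --- all of which is sound.
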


Taking into account the relationship between Maxwell-Bloch equation (\ref{eq1.1}) and Euler-Lagrange equations (\ref{eq1.3}) 
it is natural to ask what connections are between the symmetries of these systems.

Using the push forward on a vector field by $\mathbb{F}L$, one gets the corresponding vector field on $T^*\mathbb{R}^3$:
\begin{eqnarray}
(\mathbb{F}L)_*(pr^{(1)}({\bf u}))&=&(-\alpha t+\beta )\frac{\partial }{\partial t}+(\alpha q_1+\gamma q_2)\frac{\partial }{\partial q_1}+
(-\gamma q_1+\alpha q_2)\frac{\partial }{\partial q_2}+(\alpha q_3+\delta )\frac{\partial }{\partial q_3}\nonumber\\
&+&(2\alpha p_1+\gamma p_2)\frac{\partial }{\partial p_1}+(2\alpha p_2-\gamma p_1)\frac{\partial }{\partial p_2}+2\alpha p_3\frac{\partial }{\partial p_3},\nonumber
\end{eqnarray}
denoted by $\tilde{\bf v}$. Applying the push forward on the vector field $\tilde{\bf v}$ by $\varphi $ one obtains the following vector field:
\begin{equation}\label{X}
\begin{array}{r@{}l}
{\bf X}&{}=(-\alpha t+\beta )\frac{\partial }{\partial t}+(\alpha x_1+\gamma x_2)\frac{\partial }{\partial x_1}+
(2\alpha y_1+\gamma y_2)\frac{\partial }{\partial y_1}+
(-\gamma x_1+\alpha x_2)\frac{\partial }{\partial x_2}\\
&{}\\
&{}+(2\alpha y_2-\gamma y_1)\frac{\partial }{\partial y_2}+2\alpha z\frac{\partial }{\partial z}.
\end{array}
\end{equation}

Now, we can present symmetries of Maxwell-Bloch equations (\ref{eq1.1}).
\begin{prop}
\indent $(i)$ The vector field 
\eqref{X}
is a Lie point symmetry of
(\ref{eq1.1}). Moreover, it is a conformal symmetry and also a master symmetry.\\
$(ii)$ In the case $\alpha =0$, the vector field
$${\bf X}=\beta \frac{\partial }{\partial t}+\gamma x_2\frac{\partial }{\partial x_1}+\gamma y_2\frac{\partial }{\partial y_1}-
\gamma x_1\frac{\partial }{\partial x_2}-\gamma y_1\frac{\partial }{\partial y_2},$$
 is a symmetry of (\ref{eq1.1}).
\end{prop}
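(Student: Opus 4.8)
The plan is to verify the final Proposition by checking directly that the vector field $\mathbf{X}$ in \eqref{X} satisfies the defining condition of a Lie point symmetry for the first-order system \eqref{eq1.1}, and then to identify its extra structural properties (conformal symmetry and master symmetry) through the appropriate flow and Lie-bracket computations. Since the vector field $\mathbf{X}$ was obtained as $\varphi_*\big((\mathbb{F}L)_*(pr^{(1)}(\mathbf{u}))\big)$, the cleanest route is to exploit the commutativity of pushforward with the symmetry relation: because $\mathbf{u}$ is a symmetry of \eqref{eq1.3}, $(\mathbb{F}L)_*(pr^{(1)}(\mathbf{u}))$ is a symmetry of the Legendre-transformed system \eqref{eq1.2}, and then $\varphi$ being a surjective submersion (Theorem 2.1) that intertwines the dynamics should transport the symmetry down to \eqref{eq1.1}. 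However, to make part $(i)$ self-contained I would instead prolong $\mathbf{X}$ to first order and apply $pr^{(1)}(\mathbf{X})$ to each of the five equations of \eqref{eq1.1}, checking that the result vanishes on solutions; this is the standard determining-equation verification and requires only routine differentiation.

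For the conformal-symmetry claim, I would write the Maxwell-Bloch system as $\dot{\mathbf{w}} = F(\mathbf{w})$ with $\mathbf{w}=(x_1,y_1,x_2,y_2,z)$ and evaluate the Lie derivative of the associated dynamical vector field $F$ along the spatial part of $\mathbf{X}$. The expectation, given the weights $(\alpha,2\alpha,\alpha,2\alpha,2\alpha)$ appearing in \eqref{X} together with the time-scaling $-\alpha t$, is that $[\mathbf{X}_{\mathrm{spatial}}, F] = \lambda F$ for a constant conformal factor $\lambda$ proportional to $\alpha$; I would compute this bracket componentwise and read off $\lambda$. The grading here is essentially the scaling symmetry inherited from $\mathbf{u}_1$, so the conformal factor should emerge transparently once the weights are tracked.

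For the master-symmetry assertion, I would recall that a master symmetry $\mathbf{X}$ with respect to the dynamical vector field $X_H$ (and a known symmetry, here the one generated by $\gamma$) is characterized by a condition of the form $[[\mathbf{X}, X_H], X_H] = 0$ while $[\mathbf{X}, X_H]\neq 0$, so that bracketing $\mathbf{X}$ against $X_H$ produces a new symmetry. The plan is to compute $[\mathbf{X}, X_H]$ explicitly and verify that it is again a symmetry of \eqref{eq1.1} (for instance that it reproduces, up to scaling, the $\gamma$-part of \eqref{X}), and that iterating the bracket terminates as required by the definition. Finally, part $(ii)$ is the specialization $\alpha=0$: here $\mathbf{X}$ loses its scaling component and becomes the genuine (non-conformal) symmetry generating the rotation mixing the two oscillator pairs, and I would confirm directly that $pr^{(1)}(\mathbf{X})$ annihilates \eqref{eq1.1} with the conformal factor now identically zero.

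The main obstacle I anticipate is not conceptual but bookkeeping: correctly prolonging $\mathbf{X}$ and carrying the $\ddot q$-type substitutions through the first-order determining equations for a five-component system, and especially verifying the nested double bracket $[[\mathbf{X},X_H],X_H]=0$ without arithmetic slips, since the cubic nonlinearities in $X_{\tilde H}$ (and hence the coupling terms $x_1 z$, $x_2 z$, $x_1 y_1+x_2 y_2$ in $X_H$) make the intermediate expressions lengthy. The pushforward-intertwining argument via $\varphi$ could in principle shortcut the symmetry verification, but establishing the master-symmetry property genuinely seems to require the explicit iterated bracket, so that computation is where I would concentrate the care.
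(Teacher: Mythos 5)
Your plan is sound and, if executed, proves everything claimed --- but note that the paper itself contains no proof of this proposition at all: it is stated as an immediate consequence of the construction in the preceding paragraphs, where ${\bf X}$ is obtained as the pushforward $\varphi_*\bigl((\mathbb{F}L)_*(pr^{(1)}({\bf u}))\bigr)$ of the Lie point symmetry ${\bf u}$ of \eqref{eq1.3} through maps that intertwine the dynamics of \eqref{eq1.3}, \eqref{eq1.2} and \eqref{eq1.1}. That is exactly the ``cleanest route'' you mention and then set aside, so your direct verification is a genuinely different, self-contained argument; what it buys is precisely what the pushforward construction does not deliver by itself, namely the conformal factor and the master-symmetry property, which do require the explicit bracket. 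Moreover, since the coefficients $\eta_i$ of ${\bf X}$ are time-independent and $\xi=-\alpha t+\beta$ depends only on $t$, the determining equations for a Lie point symmetry of the first-order autonomous system \eqref{eq1.1} collapse to the single relation $[Y,F]=-\xi_t\,F=\alpha F$, where $Y$ is the spatial part of ${\bf X}$ and $F=X_H$ is the Maxwell--Bloch vector field; so the symmetry check and the conformal check are literally the same computation.

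Two predictions in your plan need correcting, both in your favor. First, the componentwise bracket gives $[Y,F]=\alpha F$: for instance $[Y,F]_{y_1}=z(\alpha x_1+\gamma x_2)+x_1(2\alpha z)-\bigl(2\alpha x_1z+\gamma x_2z\bigr)=\alpha x_1z$, the $\gamma$-terms cancelling in every slot. Hence your expectation that $[{\bf X},X_H]$ reproduces ``up to scaling, the $\gamma$-part of \eqref{X}'' is wrong --- the bracket is proportional to $X_H$ itself, which is of course still a symmetry of \eqref{eq1.1}. Second, because of this identity the nested double bracket you single out as the main computational burden is free: $[[Y,F],F]=\alpha[F,F]=0$, so for $\alpha\neq 0$ the master-symmetry property ($[Y,F]=\alpha F\neq 0$, $[[Y,F],F]=0$) is a one-line corollary of the conformal relation, and part $(ii)$ is just the case $\alpha=0$ of the same identity, $[Y,F]=0$. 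The only genuinely laborious step is the one you planned first: the five componentwise brackets (equivalently, the prolongation check); everything else in the proposition follows from them at no extra cost.
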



\bigskip

\noindent {\bf Acknowledgements.} This work was supported by a grant of the Romanian National Authority for Scientific
Research, CNCS - UEFISCDI, project number PN-II-RU-TE-2011-3-0006.

\end{document}